\tikzset{double line with arrow/.style args={#1,#2}{decorate,decoration={markings,%
mark=at position 0 with {\coordinate (ta-base-1) at (0,1pt);
\coordinate (ta-base-2) at (0,-2pt);},
mark=at position 1 with {\draw[#1] (ta-base-1) -- (0,1pt);
\draw[#2] (ta-base-2) -- (0,-2pt);
}}}}
\tikzset{Equal/.style={-,double line with arrow={-,-}}}
\newcommand\R{\mathbb{R}}
\newcommand{\HS}{\mathrm{H}}
\newcommand{\CS}{\mathrm{S}}
\newcommand{\CPL}{\mathrm{C}}
\newcommand{\RU}{\underline{\R}}
\newcommand{\kU}{\underline{k}}
\newcommand{\OmegaU}{{}^+\Omega}
\newcommand{\FC}{\mathcal{F}}
\newcommand{\GC}{\mathcal{G}}
\newcommand{\KC}{\mathcal{K}}
\newcommand{\IC}{\mathcal{I}}
\newcommand{\JC}{\mathcal{J}}
\newcommand{\Id}{\mathrm{id}}
\newcommand{\mult}{\mathrm{mult}}
\newcommand{\const}{\mathrm{const}}
\newcommand{\Lip}{\mathrm{Lip}}
\newcommand{\Sing}{\mathrm{Sing}\,}
\newcommand{\HH}{\mathbb{H}}
\newcommand{\sh}{\mathrm{sh}}
\newcommand{\spec}{\mathrm{spec} _{\mathbb{R}}}
\newtheorem{theorem}{Theorem}
\newtheorem*{theorem*}{Theorem}
\newtheorem{corollary}[theorem]{Corollary}
\newtheorem{lemma}[theorem]{Lemma}
\theoremstyle{definition}
\newtheorem*{definition}{Definition}
\theoremstyle{remark}
\let\uml\"
\title[]{The splitting of the de Rham cohomology\\of soft function algebras is multiplicative} 
\author[]{Igor Baskov}
\address{} 
\thanks{This research was partially financed by the Russian Science Foundation via agreement ${\rm N\textsuperscript{\underline{o}}}$ 24-21-00119.
}
\keywords{de Rham cohomology of algebras, soft function algebra, canonical splitting}
\begin{document}
\newcommand{\Addresses}{{
  \bigskip
  \footnotesize
\noindent
  \textsc{St. Petersburg Department of Steklov Mathematical Institute\\of Russian Academy of Sciences}\par\noindent
  \textit{E-mail address}: \texttt{baskovigor@pdmi.ras.ru}
}}

\begin{abstract}
Let $A$ be a real soft function algebra.
In \cite{baskov23} we have obtained a canonical splitting $\HS ^* (\Omega ^\bullet _{A|\R}) \cong \HS ^* (X,\R)\oplus \text{(something)}$ via the canonical maps $\Lambda_A:\HS ^* (X,\R)\to\HS ^* (\Omega ^\bullet _{A|\R})$ and $\Psi_A:\HS ^* (\Omega ^\bullet _{A|\R})\to\HS ^* (X,\R)$.
In this paper we prove that these maps are multiplicative.
\end{abstract}

\maketitle
\section{Introduction}

All algebras are assumed to be commutative.
To an algebra $A$ over a field $k$ one associates a dg-algebra $\Omega ^\bullet _{A|k}$ with $\Omega ^0 _{A|k} = A$, called the de Rham dg-algebra, in a standard way (see \cite[$\S 3$]{KunzKahlerDifferentials}).

In this paper we extend the results of the paper \cite{baskov23}.
There, for a soft sheaf of $k$-algebras $\FC$ on a compact Hausdorff space $X$ we have constructed a linear map
$$\Lambda _\FC :\HH ^* (X,\kU _X [0])\to \HS ^* (\Omega ^\bullet _{\FC (X)|k}).$$
Here the domain is the cohomology of $X$ with coefficients in the constant sheaf $\kU _X$.
This map is natural with respect to morphisms of $k$-ringed spaces.
For an arbitrary space $X$ and a subalgebra $A\hookrightarrow C(X)$ of the $\R$-algebra of real-valued continuous functions on $X$	 we have constructed a linear map
$$\Psi _A:\HS ^* (\Omega ^\bullet _{A|\R})\to \HH ^* (X,\RU _X [0]).$$
This map is natural with respect to continuous maps of spaces covered by a homomorphism of algebras.
We have proved that for a compact Hausdorff space $X$ and a soft subsheaf $\FC$ of $C_X$, the sheaf of $\R$-algebras of real-valued continuous functions, the composition $\Psi _{\FC (X)} \circ\Lambda _\FC$ is the identity map.
Thus, the groups $\HH ^* (X,\RU _X [0])$ canonically split off of $\HS ^* (\Omega ^\bullet _{\FC (X)})$.

But there is still a question of whether the maps $\Lambda _\FC$ and $\Psi _A$ are multiplicative with respect to the cup product on cohomology. The answer is yes and is given by Theorem~\ref{thm:lambdaismultiplicative} and Theorem~\ref{thm:psiismultiplicative}.

\subsection*{Acknowledgements}
I would like to thank Dr. Sem{\"e}n Podkorytov for his immense patience, along with many fruitful discussions and his invaluable help in drafting this paper.
I am grateful to the St. Petersburg Department of Steklov Mathematical Institute of Russian Academy of Sciences for their financial assistance.

\section{Cup product in hypercohomology}\label{sec:productinhypcoh}

\subsection{Hypercohomology groups.}
By a complex we mean a non-negative cochain complex.
By default, a complex consists of $k$-vector spaces.
For a sheaf $\FC$ of vector spaces over a field $k$ we denote by $\FC [0]$ the complex of sheaves with $\FC$ in degree $0$ and other terms zero.
We denote by $\kU_X$ the constant sheaf on a space $X$ associated with the field $k$.

For any complex $\FC^\bullet$ there is a complex $\IC^\bullet$ of injective sheaves and a quasi-isomorphism $i:\FC^\bullet\to\IC^\bullet$, see \cite[Proposition $8.4$]{Voisin_2002}.
We call $\IC^\bullet$ an \textit{injective resolution} of $\FC^\bullet$.

One defines the \textit{hypercohomology} of a complex of sheaves $\FC^\bullet$ as
$$\HH^*(X,\FC^\bullet):=\HS^*(\IC^\bullet(X))$$
for some injective resolution $\FC^\bullet\to\IC^\bullet$.
We refer the reader to \cite[Definition~$10.2$]{wedhorn2016manifolds} for the precise definition of the hypercohomology groups.

There is a canonical homomorphism
$$\Upsilon :\HS ^* (\FC ^\bullet (X))\to\HH ^* (X,\FC ^\bullet),$$
natural with respect to morphisms of complexes of sheaves.

Take two complexes of sheaves $\FC ^\bullet$ and $\GC ^\bullet$ on topological spaces $X$ and $Y$, respectively.
Suppose $\phi:Y\to X$ is a continuous map and $\gamma :\FC ^\bullet \to \phi_* \GC ^\bullet$ is a morphism of complexes of sheaves.
Then there is the induced map on the hypercohomology $\HH^*(\phi,\gamma) :\HH^*(X,\FC ^\bullet )\to \HH^*(Y,\GC ^\bullet )$.

\subsection{External cup product.}
For two complexes of sheaves $\FC^\bullet$ and $\GC^\bullet$ on a space $X$ one defines their tensor product over $k$, $\FC^\bullet\otimes\GC^\bullet$, as a complex of sheaves on $X$, see \cite[Chapter II, $6.1$]{godement1958topologie}.
We construct a map 
$$\HH^*(X,\FC^\bullet)\otimes \HH^*(X,\GC^\bullet)\to \HH^*(X,\FC^\bullet\otimes \GC^\bullet),$$
called the \textit{external cup product}.
Choose injective resolutions $$i_1:\FC^\bullet\to \IC^\bullet,\;\;\; i_2:\GC^\bullet\to \JC^\bullet,\;\;\;j:\FC^\bullet\otimes \GC^\bullet\to \KC^\bullet.$$
The map of complexes
$$i_1\otimes i_2:\FC^\bullet\otimes\GC^\bullet\to\IC^\bullet\otimes\JC^\bullet$$
is a quasi-isomorphism, as the tensor product of sheaves over a field is an exact functor.
There exists a unique up to homotopy morphism of complexes $\beta:\IC^\bullet\otimes\JC^\bullet\to\KC^\bullet$ making the following diagram commute up to homotopy
\begin{equation*}
\begin{tikzcd}[row sep=large,column sep = large]
\FC^\bullet\otimes\GC^\bullet\arrow[d,"j"]\arrow[r,"i_1\otimes i_2"]&\IC^\bullet\otimes\JC^\bullet\arrow[dl,dotted,"\beta"]\\
\KC^\bullet,&
\end{tikzcd}
\end{equation*}
see \cite[Lemma~$13.18.6$ and Lemma~$13.18.7$]{stacks-project}.

Taking the global sections over $X$ one obtains the natural map
$$\beta(X):\left(\IC^\bullet\otimes\JC^\bullet\right) (X)\to \KC^\bullet(X).$$
On the other hand, one has the natural map
$$\IC^\bullet(X)\otimes\JC^\bullet(X)\to\left(\IC^\bullet\otimes\JC^\bullet\right)(X).$$
Composing these maps and taking the cohomology leads to the natural map
$$\HH^*(X,\FC^\bullet)\otimes\HH^*(X,\GC^\bullet)\xrightarrow{}\HH^*(X,\FC^\bullet\otimes\GC^\bullet).$$
This product is natural in $\FC^\bullet$ and $\GC^\bullet$.
Moreover, it is natural in triples $(X,\FC^\bullet, \GC^\bullet)$.

The external cup product is compatible with $\Upsilon$, that is, the following diagram is commutative:
\begin{equation}\label{eq:extcupiscompatiblewithupsilon}
\begin{tikzcd}
\HS ^* (\FC ^\bullet (X))\otimes\HS ^* (\GC ^\bullet (X))\ar[r]\ar[d,"\Upsilon\otimes\Upsilon"]&\HS ^* ((\FC ^\bullet\otimes\GC^\bullet)(X))\ar[d,"\Upsilon"]\\
\HH^*(X,\FC^\bullet)\otimes \HH^*(X,\GC^\bullet)\ar[r,"\cup"]&\HH^*(X,\FC^\bullet\otimes\GC^\bullet),
\end{tikzcd}
\end{equation}
where the top map is induced by the natural map of complexes $$\FC^\bullet(X)\otimes\GC^\bullet(X)\to(\FC^\bullet\otimes\GC^\bullet)(X).$$
\subsection{Internal cup product.}
Let $\FC^\bullet$ be a sheaf complex with multiplication, that is, a complex of sheaves $\FC^\bullet$ comes with a morphism of complexes
$$\FC^\bullet\otimes\FC^\bullet\xrightarrow{\times}\FC^\bullet,$$
called the multiplication.
Composing with the external cup product, this multiplication gives rise to the product
$$\HH^*(X,\FC^\bullet)\otimes\HH^*(X,\FC^\bullet)\xrightarrow{\cup}\HH^*(X,\FC^\bullet),$$
which we call the (\textit{internal}) \textit{cup product}.

The cup product on hypercohomology satisfies the following properties.
\begin{enumerate}
\item\label{prop:naturalityofproduct}
The product on the hypercohomology
$$\HH^*(X,\FC^\bullet)\otimes \HH^*(X,\FC^\bullet)\xrightarrow{\cup} \HH^*(X,\FC^\bullet)$$
is natural in the following sense.
Let $\gamma:\FC^\bullet\to\GC^\bullet$ be a morphism of sheaf complexes with multiplication, that is, the diagram
\begin{equation*}
\begin{tikzcd}
\FC^\bullet\otimes\FC^\bullet\ar[r,"\times"]\ar[d,"\gamma\otimes\gamma",swap]&\FC^\bullet\ar[d,"\gamma"]\\
\GC^\bullet\otimes\GC^\bullet\ar[r,"\times"]&\GC^\bullet
\end{tikzcd}
\end{equation*}
is commutative.

Then we have the commutative diagram
\begin{equation*}
\begin{tikzcd}
\HH^*(X,\FC^\bullet)\otimes \HH^*(X,\FC^\bullet)\ar[r,"\cup"]\ar[d,"\HH^*(\gamma)\otimes\HH^*(\gamma)",swap]&\HH^*(X,\FC^\bullet)\ar[d,"\HH^*(\gamma)"]\\
\HH^*(X,\GC^\bullet)\otimes \HH^*(X,\GC^\bullet)\ar[r,"\cup"]&\HH^*(X,\GC^\bullet).
\end{tikzcd}
\end{equation*}

\item\label{prop:functorialitydifferentspaces}
Take a space $X$ with a sheaf complex with multiplication $\FC^\bullet$ and a space $Y$ with a sheaf complex with multiplication $\GC^\bullet$.
Let $\phi:Y\to X$ be a continuous map and $\gamma:\FC^\bullet\to\phi_*\GC^\bullet$ be a morphism of sheaf complexes with multiplication.
Then we have the commutative diagram
\begin{equation*}
\begin{tikzcd}
\HH^*(X,\FC^\bullet)\otimes \HH^*(X,\FC^\bullet)\ar[r,"\cup"]\ar[d,"\HH^*(\phi{,}\gamma)\otimes\HH^*(\phi{,}\gamma)",swap]&\HH^*(X,\FC^\bullet)\ar[d,"\HH^*(\phi{,}\gamma)"]\\
\HH^*(Y,\GC^\bullet)\otimes \HH^*(Y,\GC^\bullet)\ar[r,"\cup"]&\HH^*(Y,\GC^\bullet).
\end{tikzcd}
\end{equation*}

\item\label{prop:compatiblewithupsilon}
The product on hypercohomology is compatible with $\Upsilon$.

Explicitly, the multiplication on $\FC^\bullet$ induces the product
$$\HS ^* (\FC ^\bullet (X))\otimes \HS ^* (\FC ^\bullet (X))\to\HS ^* (\FC ^\bullet (X)).$$
Then the following diagram is commutative:
\begin{equation*}\label{eq:multiplicationhypercohomologycoincideswithusual}
\begin{tikzcd}
\HS ^* (\FC ^\bullet (X))\otimes\HS ^* (\FC ^\bullet (X))\ar[r]\ar[d,"\Upsilon\otimes\Upsilon"]&\HS ^* (\FC ^\bullet (X))\ar[d,"\Upsilon"]\\
\HH^*(X,\FC^\bullet)\otimes \HH^*(X,\FC^\bullet)\ar[r,"\cup"]&\HH^*(X,\FC^\bullet).
\end{tikzcd}
\end{equation*}
\end{enumerate}

The sheaf complex $\kU_X[0]$ carries multiplication
$$\kU_X[0]\otimes \kU_X[0]\to \kU_X[0].$$
The resulting cup product 
$$\HH^*(X,\kU_X[0])\otimes\HH^*(X,\kU_X[0])\xrightarrow{\cup}\HH^*(X,\kU_X[0])$$
coincides with the usual cup product on the sheaf cohomology, see \cite[\S~$5.3.2$]{Voisin_2002}.

\section{Algebraic de Rham forms}

For any commutative algebra with unity $A$ over a field $k$ we denote by $\Omega^\bullet _{A|k}$ the dg-algebra of algebraic de Rham forms, see \cite[$\S 3$]{KunzKahlerDifferentials}.

Let $X$ be a compact Hausdorff space.
Consider a soft sheaf of algebras $\FC$ on $X$.
In \cite[Section $2.4$]{baskov23} we consider the complex of presheaves $\Omega^\bullet_{\FC|k}$ on $X$ with $\Omega^\bullet_{\FC|k}(U)=\Omega^\bullet_{\FC(U)|k}$.
For $n\geq 0$ we denote by ${}^+\Omega^n_{\FC|k}$ the associated sheaf of $\Omega^n_{\FC|k}$.
The sheaves ${}^+\Omega^n_{\FC|k}$ form a complex of sheaves ${}^+\Omega^\bullet_{\FC|k}$.
We denote by
$$\mathrm{sh}:\Omega^\bullet_{\FC(X)|k}=\Omega^\bullet_{\FC|k}(X)\to {}^+\Omega^\bullet_{\FC|k}(X)$$
the sheafification map.

We define a morphism of sheaf complexes with multiplication $\epsilon :\kU_X [0]\to {}^+\Omega^\bullet_{\FC|k}$, called the coaugmentation, by $\epsilon (1):=1$.

The wedge product
$$\Omega^\bullet_{A|k}\otimes \Omega^\bullet_{A|k}\xrightarrow{\wedge} \Omega^\bullet_{A|k}$$
for any $k$-algebra $A$ gives rise to a product
$$\Omega^\bullet_{\FC|k}\otimes \Omega^\bullet_{\FC|k}\xrightarrow{\wedge} \Omega^\bullet_{\FC|k}$$
for the complex of presheaves $\Omega^\bullet_{\FC|k}$.
Passing to the associated sheaves gives rise to a product
$${}^+\Omega^\bullet_{\FC|k}\otimes {}^+\Omega^\bullet_{\FC|k}\xrightarrow{\wedge} {}^+\Omega^\bullet_{\FC|k}.$$
Taking the global sections yields the commutative diagram
\begin{equation}\label{diag:shmultiplicative}
\begin{tikzcd}
\Omega^\bullet_{\FC(X)|k}\otimes\Omega^\bullet_{\FC(X)|k}\ar[r,"\wedge"]\ar[d,"\sh\otimes\sh",swap]&\Omega^\bullet_{\FC(X)|k}\ar[d,"\sh"]\\
{}^+\Omega^\bullet_{\FC|k}(X)\otimes{}^+\Omega^\bullet_{\FC|k}(X)\ar[r,"\wedge"]&{}^+\Omega^\bullet_{\FC|k}(X).
\end{tikzcd}
\end{equation}

Now, we have the cup product
$$\HH^*(X,{}^+\Omega^\bullet_{\FC|k})\otimes\HH^*(X,{}^+\Omega^\bullet_{\FC|k})\xrightarrow{\cup} \HH^*(X,{}^+\Omega^\bullet_{\FC|k}).$$

\section{The map $\Lambda_\FC$ is multiplicative}
In this section we briefly recall the construction of the natural map
$$\Lambda_\FC:\HH ^*(X,\kU_X[0])\to\HH ^*(X,{}^+\Omega^\bullet_{\FC|k})$$
constructed in \cite[Chapter $3$]{baskov23}.
Then we prove that the map $\Lambda_\FC$ is multiplicative.

\begin{definition}\label{def:lambdamap}
For a soft sheaf of algebras $\FC$ on a compact Hausdorff space $X$ we define the map $\Lambda_\FC$ by the following diagram:
\begin{equation*}
\begin{tikzcd}
\HH^* (X,\kU _X [0] )\arrow[r,"\HH^* (\epsilon )"]\arrow[ddr,"\Lambda_\FC"',dashed] & \HH^* (X,\OmegaU ^\bullet _{\FC|k})\\
 & \HS ^* (\OmegaU ^\bullet _{\FC|k} (X)) \arrow[u,"\Upsilon"',"\cong"]\\
 & \HS ^* (\Omega ^\bullet _{\FC (X)|k}) \arrow[u,"\cong","\HS^* (\sh )"'].
\end{tikzcd}
\end{equation*}
The maps $\Upsilon$ and $\HS^*(\sh )$ are isomorphisms, see \cite[Section~$3.2$]{baskov23}.

The map $\HS^*(\sh )$ is a multiplicative by Diagram~\ref{diag:shmultiplicative}.
The map $\Upsilon$ is multiplicative by the property~\ref{prop:compatiblewithupsilon} of the internal cup product.
The map $\HH^*(\epsilon)$ is multiplicative by the property~\ref{prop:naturalityofproduct} of the internal cup product.
\end{definition}

Hence, we obtain the following theorem:

\begin{theorem}\label{thm:lambdaismultiplicative}
The map
$$\Lambda_\FC :\HH ^* (X,\kU _X [0] )\to \HS ^* (\Omega ^\bullet _{\FC (X)|k})$$
is multiplicative.
\end{theorem}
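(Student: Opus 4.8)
The plan is to read the statement off the factorization of $\Lambda_\FC$ already displayed in Definition~\ref{def:lambdamap}, so that the argument is a purely formal assembly of three facts recorded there. By construction $\Lambda_\FC$ is the composite
$$\Lambda_\FC = \HS^*(\sh)^{-1}\circ\Upsilon^{-1}\circ\HH^*(\epsilon),$$
in which $\HS^*(\sh)$ and $\Upsilon$ are isomorphisms. Each of the three building blocks $\HH^*(\epsilon)$, $\Upsilon$, and $\HS^*(\sh)$ is multiplicative for the reasons already noted: $\HH^*(\epsilon)$ by property~\ref{prop:naturalityofproduct} of the internal cup product (since $\epsilon$ is a morphism of sheaf complexes with multiplication), $\Upsilon$ by property~\ref{prop:compatiblewithupsilon}, and $\HS^*(\sh)$ by the commutativity of Diagram~\ref{diag:shmultiplicative}.

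First I would record the elementary fact that the inverse of a multiplicative isomorphism is again multiplicative. If $f$ is a multiplicative isomorphism, then for any classes $a,b$ in its target one has $f\bigl(f^{-1}(a)\cdot f^{-1}(b)\bigr) = a\cdot b = f\bigl(f^{-1}(a\cdot b)\bigr)$, and applying $f^{-1}$ yields $f^{-1}(a)\cdot f^{-1}(b) = f^{-1}(a\cdot b)$. Applying this to the isomorphisms $\Upsilon$ and $\HS^*(\sh)$ shows that $\Upsilon^{-1}$ and $\HS^*(\sh)^{-1}$ are multiplicative as well.

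Finally I would invoke that a composite of multiplicative maps is multiplicative, which follows immediately by stacking the defining commutative squares (with the tensor squares $g\otimes g$ composing accordingly). Since $\Lambda_\FC$ is the composite of the three multiplicative maps $\HH^*(\epsilon)$, $\Upsilon^{-1}$, and $\HS^*(\sh)^{-1}$, it is multiplicative, which is the assertion of Theorem~\ref{thm:lambdaismultiplicative}. I do not expect any genuine obstacle: all the homological content sits in the three factorization facts established earlier, so the present step is formal. The one point deserving a line of care is the inversion, since multiplicativity is stated a priori for the forward maps $\Upsilon$ and $\HS^*(\sh)$ rather than for their inverses; but as shown above this is immediate once one uses that these maps are isomorphisms.
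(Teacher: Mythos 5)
Your proposal is correct and follows essentially the same route as the paper: the paper's proof consists precisely of the observations in Definition~\ref{def:lambdamap} that $\HH^*(\epsilon)$, $\Upsilon$, and $\HS^*(\sh)$ are multiplicative (for the same three reasons you cite), from which the theorem follows since $\Lambda_\FC$ is the composite $\HS^*(\sh)^{-1}\circ\Upsilon^{-1}\circ\HH^*(\epsilon)$. Your only addition is to spell out the elementary facts---that inverses of multiplicative isomorphisms and composites of multiplicative maps are multiplicative---which the paper leaves implicit.
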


\section{Simplicial techniques}

We call a fibrant pointed simplicial set $(K,o)$ \textit{connected} if for each $v\in K_0$ there exists a simplicial map $\Delta[1]\to K$ such that $0\mapsto o$ and $1\mapsto v$ (here $0,1\in \Delta[1]_0$ are the ends).
We call $(K,o)$ \textit{contractible} if there exists a basepoint preserving homotopy
$\Delta[1]\times K\to K$ that maps the bottom base to $o$ and is the identity on the top base.

A simplicial vector space $B$ is called a simplicial module over a simplicial algebra $A$ if there is a simplicial linear map
$$A\otimes B\xrightarrow{\times} B$$
that makes $B_p$ into an $A_p$-module for each $p\geq 0$,
cf.\ \cite[Chapter $2.6$]{quillen67}.

\begin{lemma}\label{lem:simplicialalgebraiscontractible}
Let $B$ be a simplicial module over a connected simplicial algebra $A$.
Then $B$ is contractible.
\end{lemma}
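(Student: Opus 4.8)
The plan is to build the contracting homotopy of $B$ directly out of the module structure, using the connectedness of $A$ only to supply one extra ingredient: a simplicial path joining the basepoint of $A$ to its multiplicative unit. I would regard $A$ as a pointed simplicial set with basepoint the additive zero $0\in A_0$, and $B$ as pointed by $0\in B_0$. Applying the definition of connectedness to the vertex $1\in A_0$ given by the unit of the algebra produces a simplicial map $h\colon \Delta[1]\to A$ with $h(0)=0$ and $h(1)=1$. Under the Yoneda identification $\mathrm{Hom}(\Delta[1],A)=A_1$ this $h$ is nothing but a $1$-simplex $a\in A_1$ whose two faces are $0$ and $1$, but it is cleaner to keep it as a map of simplicial sets.

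Next I would turn the module multiplication into a map of the underlying simplicial sets
$$m\colon A\times B\to A\otimes B\xrightarrow{\times} B,\qquad (a,b)\mapsto a\cdot b,$$
where the first arrow is the canonical levelwise map $A_p\times B_p\to A_p\otimes B_p$ and the second is the structure map; since the latter is a simplicial linear map it commutes with all faces and degeneracies, so $m$ is simplicial. The contraction is then the composite
$$H\colon \Delta[1]\times B\xrightarrow{h\times\mathrm{id}} A\times B\xrightarrow{m} B,\qquad H(t,b)=h(t)\cdot b.$$
Checking that $H$ witnesses contractibility reduces to three module identities applied degreewise: on the bottom base $H(0,b)=0\cdot b=0$, sending it to the basepoint; on the top base $H(1,b)=1\cdot b=b$, which is the identity; and $H(t,0)=h(t)\cdot 0=0$, showing $H$ is basepoint preserving. (At the level of $n$-simplices these statements concern the degenerate simplices over the two vertices of $\Delta[1]$, whose images under $h$ are the degeneracies of $0$ and of $1$, i.e.\ $0$ and the unit of $A_n$.)

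The points requiring care — and the step I expect to be the main obstacle — are bookkeeping ones rather than deep. First, one must confirm that $m$ really is a morphism of the underlying simplicial sets, which hinges precisely on the hypothesis that $\times$ is a \emph{simplicial} linear map. Second, one must fix the basepoint and interval conventions so that the path runs from $0$ to $1$ in the orientation demanded by the definition of contractible. Should the intended basepoint of $A$ be the unit $1$ rather than $0$, connectedness would instead furnish a path from $1$ to $0$, and one would recover the required orientation by reversing it using the Kan condition guaranteed by fibrancy; with the basepoint $0$ this complication does not arise and the homotopy $H$ above works verbatim.
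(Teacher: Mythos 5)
Your proof is correct and is essentially identical to the paper's: both use connectedness to obtain a path $\Delta[1]\to A$ from $0$ to the unit $1$, and then take the composite $\Delta[1]\times B\to A\times B\xrightarrow{\times}B$ as the contracting homotopy. Your additional verification of the three pointwise identities ($0\cdot b=0$, $1\cdot b=b$, $a\cdot 0=0$) simply makes explicit what the paper leaves to the reader.
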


\begin{proof}
As $A$ is connected there exists a simplicial map $$\gamma :\Delta[1]\to A$$ connecting $0$ and $1$.
Then the composition
\begin{equation*}
\begin{tikzcd}
\Delta[1]\times B\ar[r,"\gamma\times\Id"]& A\times B\ar[r,"\times"]&B
\end{tikzcd}
\end{equation*}
is the required homotopy.
\end{proof}

We call a simplicial complex of vector spaces $A^\bullet$ \textit{degree-wise contractible} if $A^q$ is contractible for each $q\geq 0$.
Note that in \cite{felix01} the degree-wise contractible simplicial complexes are called ``extendable complexes''.
We call a simplicial map of simplicial complexes $A^\bullet\to B^\bullet$ a \textit{dimension-wise quasi-isomorphism} if it is a quasi-isomorphism in each simplicial dimension.

\begin{corollary}\label{cor:simplicialalgebraiscontractible}
Let $A^\bullet$ be a simplicial graded algebra such that the simplicial algebra $A^0$ is connected.
Then $A^\bullet$ is degree-wise contractible.
\end{corollary}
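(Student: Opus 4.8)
The plan is to reduce the corollary directly to Lemma~\ref{lem:simplicialalgebraiscontractible} by recognizing each graded piece $A^q$ as a simplicial module over the degree-zero part $A^0$. Since degree-wise contractibility is, by definition, the assertion that every $A^q$ is contractible as a simplicial vector space, it suffices to supply each $A^q$ with an appropriate module structure over a connected simplicial algebra and then invoke the lemma.

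First I would unpack the structure of a simplicial graded algebra $A^\bullet$. In each simplicial dimension $p$ the object $A_p^\bullet=\bigoplus_{q\geq 0}A_p^q$ is a graded algebra, so its multiplication satisfies $A_p^0\cdot A_p^q\subseteq A_p^{0+q}=A_p^q$ for every $q\geq 0$; in particular $A_p^0$ is an ordinary algebra and each $A_p^q$ is a module over it. Because the grading is respected by the face and degeneracy operators and the multiplication is a simplicial map, restricting it to $A^0\otimes A^q\to A^{0+q}=A^q$ produces a simplicial linear map making $A_p^q$ an $A_p^0$-module for each $p\geq 0$. This is precisely the data demanded by the definition of a simplicial module: $A^q$ is a simplicial module over the simplicial algebra $A^0$.

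With this observation the argument closes immediately: the hypothesis that $A^0$ is connected allows me to apply Lemma~\ref{lem:simplicialalgebraiscontractible} to the $A^0$-module $A^q$, yielding that $A^q$ is contractible, and since $q\geq 0$ was arbitrary the simplicial complex $A^\bullet$ is degree-wise contractible by definition. The only point requiring care—more bookkeeping than genuine obstacle—is checking that the restricted multiplication $A^0\otimes A^q\to A^q$ is simplicial and furnishes a module structure in all dimensions simultaneously rather than merely dimension by dimension; this compatibility with the face and degeneracy maps is built into the notion of a simplicial graded algebra, so no differential and no further structure beyond the lemma is needed.
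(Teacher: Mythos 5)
Your proof is correct and is exactly the argument the paper intends: the paper states this corollary with no written proof precisely because, as you observe, each $A^q$ is a simplicial module over $A^0$ via the restricted graded multiplication, so Lemma~\ref{lem:simplicialalgebraiscontractible} applies directly (compare the paper's proofs of Corollary~\ref{cor:productisdegreewisecontractible} and Lemma~\ref{lem:omegaisdegreewisecontractible}, which use the same reduction).
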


\begin{corollary}\label{cor:productisdegreewisecontractible}
Let $A^\bullet$ and $B^\bullet$ be simplicial graded algebras and $A^\bullet$ be degree-wise contractible.
Then $A^\bullet\otimes B^\bullet$ is degree-wise contractible.
\end{corollary}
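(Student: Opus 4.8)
The plan is to deduce the statement from the already-established Lemma~\ref{lem:simplicialalgebraiscontractible} by exhibiting, in each simplicial degree, an action of the connected simplicial algebra $A^0$ on the relevant graded piece. First I would unwind the grading: for each $n\geq 0$ one has $(A^\bullet\otimes B^\bullet)^n=\bigoplus_{p+q=n}A^p\otimes B^q$, a finite direct sum of simplicial vector spaces. Degree-wise contractibility of $A^\bullet\otimes B^\bullet$ means precisely that each such $n$-th piece is contractible, so it suffices to realize the whole sum as a simplicial module over some connected simplicial algebra and invoke the Lemma.

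Next I would check that $A^0$ is connected. Since $A^\bullet$ is degree-wise contractible, $A^0$ is contractible, so there is a homotopy $H:\Delta[1]\times A^0\to A^0$ that is constant at the basepoint $0$ on the bottom base and the identity on the top base. For any vertex $v\in A^0_0$ the composite $\Delta[1]\xrightarrow{(\Id,v)}\Delta[1]\times A^0\xrightarrow{H}A^0$ is then a path with $0\mapsto 0$ and $1\mapsto v$, which is exactly the condition for $A^0$ to be connected in the sense of the definition.

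The key step is to put a simplicial $A^0$-module structure on $(A^\bullet\otimes B^\bullet)^n$. The graded multiplication of $A^\bullet$ restricts to $A^0\otimes A^p\to A^p$, making each $A^p$ a simplicial module over $A^0$; tensoring with $\Id_{B^q}$ gives $A^0\otimes(A^p\otimes B^q)\to A^p\otimes B^q$, $a\otimes(x\otimes y)\mapsto(ax)\otimes y$, where no Koszul sign intervenes because $A^0$ sits in degree $0$. Summing over $p+q=n$ yields a simplicial linear action of $A^0$ on $(A^\bullet\otimes B^\bullet)^n$ satisfying the module axioms (associativity and unitality follow from associativity of the graded multiplication of $A^\bullet$). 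Applying Lemma~\ref{lem:simplicialalgebraiscontractible} to this module over the connected algebra $A^0$ shows that $(A^\bullet\otimes B^\bullet)^n$ is contractible for every $n$, which is the claim.

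The main obstacle to anticipate is the tempting but unavailable shortcut of applying Corollary~\ref{cor:simplicialalgebraiscontractible} directly to $A^\bullet\otimes B^\bullet$: that would require the degree-zero algebra $(A^\bullet\otimes B^\bullet)^0=A^0\otimes B^0$ to be connected, and we have no handle on this since $B^\bullet$ is not assumed contractible and $B^0$ need not be connected. The resolution is to act by $A^0$ alone on the left tensor factor, rather than by the full degree-zero part of the tensor product, so that only the connectedness of $A^0$ — which the hypothesis does supply — is used.
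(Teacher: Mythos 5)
Your proof is correct and takes essentially the same route as the paper: the paper's one-line proof also makes $(A^\bullet\otimes B^\bullet)^q$ a simplicial module over $A^0$ acting on the left tensor factor, notes that $A^0$ is connected because it is contractible, and invokes Lemma~\ref{lem:simplicialalgebraiscontractible}. Your additional details (the direct-sum decomposition, the explicit path argument showing contractible implies connected, and the warning against applying Corollary~\ref{cor:simplicialalgebraiscontractible} to $A^0\otimes B^0$) merely fill in what the paper leaves implicit.
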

\begin{proof}
The simplicial vector space $(A^\bullet\otimes B^\bullet)^q$ is a simplicial module over the simplicial algebra $A^0$, which is connected as it is contractible.
\end{proof}

For any simplicial complex $A^\bullet$ and a simplicial set $K$ one defines a complex $A^\bullet (K)$ in a usual way:
for $q\geq 0$ the vector space $A^q(K)$ consists of all simplicial maps $K\to A^q$, cf. \cite[Chapter $10$, $(b)$]{felix01}.
If $A^\bullet$ is a simplicial dg-algebra, then $A^\bullet(K)$ is a dg-algebra.

\begin{lemma}\label{lem:degreewisecontrinduces}
Let $\theta:A^\bullet \to B^\bullet$ be a dimension-wise quasi-isomorphism of degree-wise contractible simplicial complexes.
Then for any simplicial set $K$, the induced morphism of complexes $\theta(K):A^\bullet(K)\to B^\bullet(K)$ is a quasi-isomorphism.
\end{lemma}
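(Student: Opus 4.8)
The plan is to reduce the statement to the behaviour of the functor $A^\bullet\mapsto A^\bullet(K)$ along the skeletal filtration of $K$, and to extract from degree-wise contractibility the single homotopical input that makes this filtration split into short exact sequences. By the Yoneda lemma $A^\bullet(\Delta[n])=A^\bullet_n$, the cochain complex sitting in simplicial dimension $n$, so the hypothesis that $\theta$ is a dimension-wise quasi-isomorphism says precisely that $\theta(\Delta[n])$ is a quasi-isomorphism for every $n$. The step I expect to be the main obstacle is to promote this from simplices to arbitrary $K$, and for that I first need the restriction maps
\[
A^q(\Delta[n])\longrightarrow A^q(\partial\Delta[n]),\qquad q,n\ge 0,
\]
to be surjective (the ``extendability'' of \cite{felix01}). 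This is exactly where degree-wise contractibility enters: each $A^q$ is a simplicial vector space, hence fibrant, and the contracting homotopy provided by degree-wise contractibility exhibits $A^q\to\ast$ as a weak equivalence. Being both a Kan fibration (as $A^q$ is fibrant) and a weak equivalence, $A^q\to\ast$ is a trivial fibration, so it has the right lifting property against every boundary inclusion $\partial\Delta[n]\hookrightarrow\Delta[n]$, and this lifting property is literally the asserted surjectivity. The same applies to $B^\bullet$.

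Next I would treat finite-dimensional $K$ by induction on $\dim K$. Writing $K^{(n)}$ for the $n$-skeleton, the pushout expressing $K^{(n)}$ as $K^{(n-1)}$ with the nondegenerate $n$-simplices attached along their boundaries is carried by the contravariant functor $A^\bullet(-)$ to a pullback square; the surjectivity just established makes the relevant restriction map onto, so the pullback yields a short exact sequence of cochain complexes
\[
0\to A^\bullet(K^{(n)})\to A^\bullet(K^{(n-1)})\oplus\prod_{\sigma}A^\bullet_n\to\prod_{\sigma}A^\bullet(\partial\Delta[n])\to 0,
\]
where $\sigma$ runs over the nondegenerate $n$-simplices of $K$, and likewise for $B^\bullet$, with $\theta$ inducing a morphism between the two sequences. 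Since we work over a field, arbitrary products are exact and commute with cohomology, so on the middle and right-hand terms $\theta$ is a quasi-isomorphism by the inductive hypothesis (note $\dim\partial\Delta[n]=n-1$ and $\dim K^{(n-1)}\le n-1$) together with the dimension-wise hypothesis applied to $A^\bullet_n=A^\bullet(\Delta[n])$. The five lemma applied to the two long exact cohomology sequences then forces $\theta(K^{(n)})$ to be a quasi-isomorphism; the base case $\dim K=0$ is the identification $A^\bullet(K^{(0)})=\prod_{v}A^\bullet_0$, on which $\theta$ acts as $\prod_{v}\theta_0$.

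Finally, for general $K$ I would pass to the limit. Since $K=\colim_n K^{(n)}$ and $A^\bullet(-)$ turns this colimit into the inverse limit $A^\bullet(K)=\varprojlim_n A^\bullet(K^{(n)})$, and since the restriction maps $A^q(K^{(n)})\to A^q(K^{(n-1)})$ are surjective (again by extendability), the tower is degree-wise surjective and there is a Milnor short exact sequence
\[
0\to\varprojlim\nolimits^{1}_{n}\,\HS^{\,q-1}(A^\bullet(K^{(n)}))\to \HS^{\,q}(A^\bullet(K))\to\varprojlim\nolimits_{n}\,\HS^{\,q}(A^\bullet(K^{(n)}))\to 0,
\]
and the same for $B^\bullet$. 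By the finite-dimensional case $\theta$ induces an isomorphism of towers $\{\HS^{*}(A^\bullet(K^{(n)}))\}\to\{\HS^{*}(B^\bullet(K^{(n)}))\}$, hence isomorphisms on both $\varprojlim$ and $\varprojlim^{1}$, and a last application of the five lemma shows that $\theta(K)$ is a quasi-isomorphism. The conceptual weight of the argument sits entirely in the surjectivity of the restriction maps: once contractibility is converted into that lifting property, everything downstream is routine homological algebra.
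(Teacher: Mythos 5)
Your proof is correct and takes essentially the same route as the paper, whose own ``proof'' of this lemma is simply a citation of Proposition 10.5 in F\'elix--Halperin--Thomas: there too the argument converts degree-wise contractibility into extendability (surjectivity of the restrictions $A^q(\Delta[n])\to A^q(\partial\Delta[n])$), runs the skeletal induction with the five lemma, and treats infinite-dimensional $K$ via the tower of skeleta and a $\varprojlim$/$\varprojlim^1$ (Milnor sequence) argument. The only cosmetic difference is that you derive extendability from the model-categorical fact that a contractible Kan complex maps to the point by a trivial fibration, whereas the cited source builds the extension directly from the contracting homotopy.
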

\begin{proof}
See \cite[Proposition $10.5$]{felix01}. 
\end{proof}

\section{The map $\Psi$ is multiplicative}
In the paper~\cite{baskov23} we have contructed the map
$$\Psi:\HS^*(\Omega^\bullet_{A|\R})\to \HH^*(X,\underline{\R}_X[0])$$
for any topological space $X$ and any subalgebra $A$ of the algebra of real-valued continuous functions on $X$.
In this section we prove that this map is multiplicative.

\subsection{Simplicial dg-algebra $\R[0]_\const$.}
We denote by $\R[0]_\const$ the simplicial dg-algebra that equals $\R[0]$ in each dimension.
We have $$\R[0]_\const\otimes \R[0]_\const= \R[0]_\const.$$

\subsection{Simplicial dg-algebra $\CPL^\bullet$.}
We denote by $\CPL^\bullet$ the simplicial dg-algebra of (unnormalized) simplicial cochains on the combinatorial simplices $\Delta[n]$, cf. \cite[Chapter $10$, $(d)$]{felix01}.
The multiplication is the usual Alexander--Whitney cup product.
There is a unique morphism of simplicial dg-algebras $\epsilon:\R[0]_\const\to\CPL ^\bullet$, called the coaugmentation.

\begin{lemma}\label{lem:lipschitzcochainsaredwcontractible}
The simplicial dg-algebra $\CPL^\bullet$ is degree-wise contractible.
Moreover the coaugmentation map $\epsilon:\R[0]_\const\to\CPL ^\bullet$ is a dimension-wise quasi-isomorphism.
\end{lemma}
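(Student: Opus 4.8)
The plan is to deduce the first claim from Corollary~\ref{cor:simplicialalgebraiscontractible} and the second from the contractibility of the standard simplices. Recall that in simplicial dimension $n$ the dg-algebra $\CPL^\bullet$ is the unnormalized simplicial cochain complex of $\Delta[n]$, so that for a fixed cochain degree $q$ the simplicial vector space $\CPL^q$ has as its $n$-simplices the $\R$-valued functions on the set $\Delta[n]_q$ of $q$-simplices of $\Delta[n]$; the simplicial structure is by restriction of cochains along the cofaces and codegeneracies of $n\mapsto\Delta[n]$. Under the Alexander--Whitney product $\CPL^\bullet$ is a simplicial graded algebra, and its degree-zero part $\CPL^0$ is the simplicial commutative algebra whose $n$-simplices are the functions on the vertex set $\Delta[n]_0$, with pointwise multiplication.

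First I would verify that $\CPL^0$ is connected. Its underlying simplicial set is a simplicial vector space, hence a Kan complex, so it is fibrant; I point it at its additive zero $o$, as in the proof of Lemma~\ref{lem:simplicialalgebraiscontractible}. The vertices of $\CPL^0$ are the functions on the single vertex of $\Delta[0]$, that is, the elements $r\in\R$. Given such an $r$, I would take the $1$-simplex $\alpha$ of $\CPL^0$ --- a function on the two vertices of $\Delta[1]$ --- with value $0$ at the vertex $0$ and value $r$ at the vertex $1$. Since a $0$-cochain on $\Delta[1]$ is exactly a function on its vertices, the classifying simplicial map $\Delta[1]\to\CPL^0$ of $\alpha$ sends $0\mapsto o$ and $1\mapsto r$, the required connecting path. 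Hence $\CPL^0$ is a connected simplicial algebra, and Corollary~\ref{cor:simplicialalgebraiscontractible} gives at once that $\CPL^\bullet$ is degree-wise contractible.

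For the second claim I would argue one simplicial dimension at a time. For fixed $n$ the map in question is the coaugmentation $\R[0]\to\CPL^\bullet$ of the dimension-$n$ component, sending the unit to the constant cochain $1$. The cohomology of this component is the simplicial cohomology of $\Delta[n]$ with coefficients in $\R$; as $\Delta[n]$ is contractible it equals $\R$ concentrated in degree $0$, and the coaugmentation realizes this identification in degree $0$. Thus the map is a quasi-isomorphism in every simplicial dimension, i.e.\ $\epsilon$ is a dimension-wise quasi-isomorphism.

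The only step carrying genuine content is the vanishing of the cohomology of the unnormalized cochain complex of $\Delta[n]$ in positive degrees; this I expect to be the main point. I would obtain it from an explicit contracting homotopy on the unnormalized cochains, built from the cone on the terminal vertex $n$ of the poset $[n]$ --- the standard extra-degeneracy argument. Everything else reduces to bookkeeping of the simplicial structure and the single face-map computation used to produce the connecting path.
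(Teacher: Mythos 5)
Your proof is correct, but it takes a genuinely different route from the paper, whose entire ``proof'' is the citation ``Cf.\ [FHT01, Lemma~10.12]'' --- a statement about the \emph{normalized} simplicial cochain algebra, leaving to the reader both the details and the passage to the unnormalized cochains $\CPL^\bullet$ actually used here. Your argument is self-contained and treats the unnormalized case directly. For contractibility you reuse the paper's own machinery: the Alexander--Whitney product makes each $\CPL^q$ a simplicial module over the simplicial algebra $\CPL^0$ of vertex functions (a $0$-cochain $f$ acts by $(f\cup\omega)(\sigma)=f(\sigma(0))\,\omega(\sigma)$), your explicit $1$-simplex exhibits $\CPL^0$ as connected (and simplicial vector spaces are Kan, so fibrancy is automatic), and Corollary~\ref{cor:simplicialalgebraiscontractible} finishes the claim --- this is precisely the mechanism the paper itself uses for $\Omega^\bullet_\flat$ in Lemma~\ref{lem:omegaisdegreewisecontractible}, so your proof makes the two contractibility statements uniform. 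For the quasi-isomorphism claim, the cone at the terminal vertex of $[n]$ is the classical extra-degeneracy contraction of the augmented (co)chain complex of $\Delta[n]$; it is well defined on unnormalized cochains because the cone operator needs no compatibility with degeneracies, and dualizing a chain contraction over a field is harmless. What the paper's citation buys is brevity and an appeal to a standard reference; what your argument buys is a complete proof of the statement exactly as stated, independent of the normalized--unnormalized comparison and of the FHT notion of extendability, which the paper identifies with degree-wise contractibility only in a passing remark.
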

Cf. \cite[Lemma $10.12$]{felix01} for the normalized simplicial cochains.

\subsection{Simplicial dg-algebra $\Omega^\bullet _\flat$}
In \cite[Chapter $5$]{baskov23} we have introduced the simplicial dg-algebra of flat cochains $\Omega^\bullet _\flat$.
For each $n\geq 0$, the dg-algebra $\Omega^\bullet _\flat (\Delta ^n)$ consists of affine cochains on $\Delta ^n$ bounded with respect to the flat seminorm.

Consider the simplicial algebra $\Lip$, where, for each $n\geq 0$, the algebra $\Lip(\Delta^n)$ consists of all Lipschitz functions on $\Delta^n$.
By \cite[$5.2(2)$]{baskov23} there is a morphism of simplicial algebras $$\zeta :\Lip\to \Omega ^0 _\flat.$$

\begin{lemma}\label{lem:omegaisdegreewisecontractible}
The simplicial dg-algebra $\Omega^\bullet _\flat $ is degree-wise contractible.
\end{lemma}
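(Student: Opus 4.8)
The plan is to reduce the statement to Corollary~\ref{cor:simplicialalgebraiscontractible}. The simplicial dg-algebra $\Omega^\bullet_\flat$ is in particular a simplicial graded algebra, so by that corollary it suffices to show that the simplicial algebra $\Omega^0_\flat$ sitting in degree $0$ is connected.

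To check connectedness, recall that the basepoint of a simplicial algebra is its zero element $0$, so $\Omega^0_\flat$ is connected precisely when every vertex $v\in(\Omega^0_\flat)_0=\Omega^0_\flat(\Delta^0)$ can be joined to $0$ by a simplicial map $\Delta[1]\to\Omega^0_\flat$. By the Yoneda lemma such a map is the same datum as a $1$-simplex, i.e.\ an element $f\in(\Omega^0_\flat)_1=\Omega^0_\flat(\Delta^1)$ whose restrictions to the two vertices of $\Delta^1$ are $0$ and $v$. I would build $f$ from the affine function $t\mapsto v\,t$ on $\Delta^1=[0,1]$: it is Lipschitz, hence lies in $\Lip(\Delta^1)$, and applying the morphism of simplicial algebras $\zeta:\Lip\to\Omega^0_\flat$ produces $f:=\zeta(v\,t)\in\Omega^0_\flat(\Delta^1)$. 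Since $\zeta$ commutes with the face maps, the two vertex restrictions of $f$ are the images under $\zeta$ of the vertex restrictions $0$ and $v$ of $v\,t$, namely $0$ and $v$ themselves.

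The only genuine bookkeeping is to confirm that $\zeta$ reaches every vertex in simplicial dimension $0$. Here $\Delta^0$ is a point, so $(\Omega^0_\flat)_0=\Omega^0_\flat(\Delta^0)$ and $\Lip(\Delta^0)$ are both the algebra $\R$ of constants, and $\zeta$ in dimension $0$ is a unital $\R$-algebra homomorphism $\R\to\R$, hence the identity; thus every $v$ occurs as a value of $\zeta$, and the construction above applies to each vertex. One must also record that the face maps of $\Omega^0_\flat$ are restriction to the corresponding vertices, so that the endpoints of $f$ are computed as claimed. Granting these identifications, $\Omega^0_\flat$ is connected, and Corollary~\ref{cor:simplicialalgebraiscontractible} then yields that $\Omega^\bullet_\flat$ is degree-wise contractible.

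I expect no serious obstacle in this argument: the connecting $1$-simplex is handed to us directly by the affine path $v\,t$ together with $\zeta$, and the remaining work is only the routine identification of $(\Omega^0_\flat)_0$ with $\R$ and of the simplicial face operators with restriction to vertices.
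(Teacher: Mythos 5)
Your proof is correct, but it takes a slightly different route than the paper. The paper goes directly through Lemma~\ref{lem:simplicialalgebraiscontractible}: by construction each $\Omega^q_\flat$ is a simplicial module over the simplicial algebra $\Omega^0_\flat$, hence (restricting scalars along $\zeta$) a simplicial module over $\Lip$, so everything reduces to connectedness of $\Lip$ --- which is exactly your affine path $t\mapsto vt$. You instead verify the hypothesis of Corollary~\ref{cor:simplicialalgebraiscontractible}, namely connectedness of $\Omega^0_\flat$ itself, and for that you must identify the vertex set $(\Omega^0_\flat)_0=\Omega^0_\flat(\Delta^0)$ with $\R$ and check that $\zeta$ hits every vertex. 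Those claims are true (flat $0$-cochains on a point form $\R$, and a unital $\R$-algebra endomorphism of $\R$ is the identity), but they are extra bookkeeping that depends on details of the construction of $\Omega^\bullet_\flat$ in \cite{baskov23} which the paper's argument never needs: the paper uses nothing about $\zeta$ or about the simplices of $\Omega^0_\flat$ beyond the mere existence of the algebra morphism $\zeta$, so its proof would survive even if $\zeta$ failed to be surjective on vertices, whereas yours would stall at that point. (You could also sidestep the vertex computation by observing that $\Omega^0_\flat$, being a simplicial module over the connected $\Lip$, is contractible by Lemma~\ref{lem:simplicialalgebraiscontractible} and hence connected --- but then you have essentially reproduced the paper's proof with an extra step.) In the end both arguments rest on the same two ingredients, Lemma~\ref{lem:simplicialalgebraiscontractible} (of which Corollary~\ref{cor:simplicialalgebraiscontractible} is an instance) and the Lipschitz path $vt$; yours buys a statement about $\Omega^0_\flat$ itself at the cost of added dependence on the flat-cochain construction, while the paper's is the more robust reduction.
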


\begin{proof}
By construction of $\Omega^\bullet _\flat $, for each $q\geq 0$, $\Omega^q _\flat $ is a simplicial module over $\Omega^0 _\flat $ and, hence, over $\Lip$.
By Lemma~\ref{lem:simplicialalgebraiscontractible} it suffices to prove that the simplicial algebra $\Lip$ is connected, which is easy to see.
\end{proof}

There is a unique morphism of simplicial dg-algebras $\epsilon:\R[0]_\const\to\Omega^\bullet _\flat (\Delta^n)$, called the coaugmentation.

\begin{lemma}\label{lem:omegaflatisacyclic}
The map $\epsilon:\R[0]_\const\to\Omega^\bullet _\flat$ is a dimension-wise quasi-isomorphism.
\end{lemma}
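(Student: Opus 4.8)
The plan is to unwind the definition: proving that $\epsilon\colon\R[0]_\const\to\Omega^\bullet_\flat$ is a dimension-wise quasi-isomorphism means proving, for each fixed $n\ge 0$, that the induced map $\R[0]\to\Omega^\bullet_\flat(\Delta^n)$ is a quasi-isomorphism of cochain complexes. Writing $d$ for the differential of the dg-algebra $\Omega^\bullet_\flat(\Delta^n)$, I would show that $H^0(\Omega^\bullet_\flat(\Delta^n))=\R$ is generated by $\epsilon(1)=1$ and that $H^q(\Omega^\bullet_\flat(\Delta^n))=0$ for all $q\ge 1$. In other words, the flat cochain complex on the simplex is acyclic apart from the constants; this is a Poincar\'e lemma for flat cochains on the convex set $\Delta^n$.

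The heart of the construction is a cone homotopy. I would fix a vertex $v_0$ of $\Delta^n$ and use the cone operator $\tau\mapsto v_0*\tau$ on affine chains, which raises dimension by one and, by convexity of $\Delta^n$, stays inside $\Delta^n$. Dually I would define $h\colon\Omega^q_\flat(\Delta^n)\to\Omega^{q-1}_\flat(\Delta^n)$ by $(hX)(\tau):=X(v_0*\tau)$. Using the cone boundary formula $\partial(v_0*\tau)=\tau-v_0*\partial\tau$ for $\dim\tau\ge 1$ and $\partial(v_0*p)=p-v_0$ for a point $p$, a direct computation gives the homotopy identity $dh+hd=\Id$ on $\Omega^q_\flat(\Delta^n)$ for every $q\ge 1$, whereas on $\Omega^0_\flat(\Delta^n)$ it gives $hd\,X=X-X(v_0)\cdot 1$ (note $hX=0$ there for degree reasons). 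The first identity forces $H^q=0$ for $q\ge 1$, and the second shows that a closed $0$-cochain is constant, so $H^0=\R$ is generated by $\epsilon(1)=1$. This settles the cohomology purely formally.

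The main obstacle is to check that $h$ genuinely lands in flat cochains, i.e.\ that $X$ has finite flat seminorm implies $hX$ does. On cochains the flat seminorm is comparable to $\max(\|X\|,\|dX\|)$, where $\|\cdot\|$ is the comass, so it suffices to bound these two quantities for $hX$. This reduces to the flat-boundedness of the cone operator on chains: since $\Delta^n$ is convex of finite diameter, coning obeys a mass estimate $M(v_0*\tau)\le c_n\,M(\tau)$, whence $\|hX\|\le c_n\|X\|$ at once; and using the relation $d(hX)=X-h(dX)$ coming from the homotopy identity, $\|d(hX)\|$ is bounded by $\|X\|+c_n\|dX\|$. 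Both are finite because $X$ is flat, so $h$ is a well-defined operator on $\Omega^\bullet_\flat(\Delta^n)$. This flat-boundedness of the cone is the technical core and is the flat-cochain analogue of Whitney's Poincar\'e lemma; I expect it to follow from the construction of $\Omega^\bullet_\flat$ in \cite[Chapter $5$]{baskov23}. Once it is in place, the homotopy identities above complete the proof.
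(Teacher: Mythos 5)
Your proposal is correct, but you should know that the paper does not actually prove this lemma at all: its entire proof is the pointer ``see the construction of $\Omega^\bullet_\flat(\Delta^n)$ and \cite[Chapter~VII, Theorem~12A]{whitney57}''. What you have written out is, in substance, the argument behind that citation: Whitney's theorem is a Poincar\'e-type lemma for flat cochains, proved by a bounded deformation/homotopy operator, and your cone operator $hX(\tau)=X(v_0*\tau)$ (dual to the straight-line contraction of the convex set $\Delta^n$ onto the vertex $v_0$) is exactly the relevant instance. Your homotopy identities $dh+hd=\Id$ for $q\ge 1$ and $h(dX)=X-X(v_0)\cdot 1$ for $q=0$ are verified correctly, as are the estimates $\|hX\|\le c_n\|X\|$ and $\|d(hX)\|\le\|X\|+c_n\|dX\|$ showing that $h$ preserves flatness. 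The trade-off between the two routes is this: the bare citation lets Whitney carry the two technical inputs your sketch rests on, namely (i) the identification of the flat seminorm of a cochain with $\max(\|X\|,\|dX\|)$, and (ii) the fact that the cone operator is well defined and mass-bounded on the whole group of affine chains, not merely on individual simplices --- one needs subadditivity of mass (cones over non-overlapping simplices can overlap) and compatibility of coning with the subdivision identifications built into the chain group, so that $hX$ is a legitimate linear functional on chains. Your version buys a self-contained and transparent proof; if you wanted it fully rigorous without invoking Whitney wholesale, point (ii) is the one place where genuine work remains, and it is precisely what the flat-chain machinery of \cite{whitney57}, used in the construction of $\Omega^\bullet_\flat$ in \cite{baskov23}, supplies.
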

\begin{proof}
See the construction of $\Omega^\bullet _\flat (\Delta ^n)$ and \cite[Chapter VII, Theorem $12$A]{whitney57}.
\end{proof}

\subsection{Spectrum of a finitely generated algebra.}
For a set $Z\subset\R^n$ one defines the simplicial set $\Sing _\Lip (Z)$ of Lipschitz singular simplices.

We denote by $\CS^\bullet _\Lip(Z)$ the complex of Lipschitz singular cochains on $Z$.
This complex can be equipped with the Alexander--Whitney cup product, which is non-commutative.
We have
$$\CPL ^\bullet (\Sing _\Lip (Z))=\CS^\bullet _\Lip(Z),$$
where the product on the left, that comes from the product on $\CPL ^\bullet$, coincides with the Alexander--Whitney cup product on the right.

For a finitely generated $\R$-algebra $B$ one constructs the \textit{real maximal spectrum} $\spec B$.
It is an algebraic set.
For a Lipschitz singular simplex $\sigma:\Delta^n\to\spec B$ one introduces the dg-algebra morphism
$$\mu(\sigma):\Omega^\bullet_{B|\R}\to\Omega^\bullet_\flat (\Delta ^n),$$
see \cite[Section $6.1$]{baskov23}.

\begin{lemma}\label{lem:muiscompatible}
Take $h:[m]\to [n]$ a morphism in the category $\boldsymbol{\Delta}$.
The following diagram is commutative:
\begin{equation*}
\begin{tikzcd}
\Omega ^\bullet _{B|\R} \arrow[d,"\mu (\sigma)"']\arrow[rd,"\mu (h^*\sigma)"] &\\
\Omega ^\bullet _\flat (\Delta ^n)\arrow[r,"h^*"] & \Omega ^\bullet _\flat (\Delta ^{n-1})
.\end{tikzcd}
\end{equation*}
\end{lemma}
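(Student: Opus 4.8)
The plan is to reduce the commutativity of the triangle to the \emph{universal property} of the algebraic de Rham dg-algebra and then verify the resulting identity in degree $0$, where everything is expressed through ordinary pullback of functions. The first observation is that all three arrows in the triangle are morphisms of dg-algebras: $\mu(\sigma)$ and $\mu(h^*\sigma)$ are dg-algebra morphisms by their construction in \cite[Section~$6.1$]{baskov23}, and $h^*=\Omega^\bullet_\flat(h)$ is a dg-algebra morphism because $\Omega^\bullet_\flat$ is a simplicial dg-algebra. Consequently both composites $h^*\circ\mu(\sigma)$ and $\mu(h^*\sigma)$ are dg-algebra morphisms $\Omega^\bullet_{B|\R}\to\Omega^\bullet_\flat(\Delta^m)$, and it suffices to prove that they coincide.

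Next I would invoke the universal property of $\Omega^\bullet_{B|\R}$, see \cite[$\S 3$]{KunzKahlerDifferentials}: since $\Omega^\bullet_{B|\R}$ is generated as a dg-algebra by its degree-zero part $\Omega^0_{B|\R}=B$ together with the de Rham differential, a morphism of dg-algebras out of $\Omega^\bullet_{B|\R}$ is uniquely determined by the underlying $\R$-algebra homomorphism in degree $0$. Thus two dg-algebra morphisms out of $\Omega^\bullet_{B|\R}$ that agree on $B$ are equal, and the problem collapses to checking that $h^*\circ\mu(\sigma)$ and $\mu(h^*\sigma)$ agree on $B=\Omega^0_{B|\R}$.

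The degree-$0$ check is then a direct computation. Write $\hat h:\Delta^m\to\Delta^n$ for the affine map induced by $h$, so that $h^*\sigma=\sigma\circ\hat h$, while $h^*$ on $\Omega^\bullet_\flat$ is pullback of cochains along $\hat h$. By the construction of $\mu$, the degree-zero component of $\mu(\sigma)$ sends $b\in B$ to $\zeta(b\circ\sigma)$, where $b\circ\sigma\in\Lip(\Delta^n)$ is the Lipschitz function obtained by pulling $b$, viewed as a function on $\spec B$, back along $\sigma$. Using that $\zeta:\Lip\to\Omega^0_\flat$ is a morphism of \emph{simplicial} algebras, hence natural with respect to $h$, I would compute
$$h^*\bigl(\mu(\sigma)(b)\bigr)=h^*\bigl(\zeta(b\circ\sigma)\bigr)=\zeta\bigl((b\circ\sigma)\circ\hat h\bigr)=\zeta\bigl(b\circ(h^*\sigma)\bigr)=\mu(h^*\sigma)(b),$$
which is exactly the required equality on $B$, and hence, by the reduction above, on all of $\Omega^\bullet_{B|\R}$.

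I expect the main obstacle to be purely bookkeeping: extracting from the construction in \cite[Section~$6.1$]{baskov23} the precise degree-$0$ description of $\mu(\sigma)$ as $b\mapsto\zeta(b\circ\sigma)$, and confirming the naturality of $\zeta$ with respect to the simplicial operators induced by $h$. Once these two facts are in hand, the universal property carries the rest of the argument, and no analysis of flat cochains in positive degree is needed.
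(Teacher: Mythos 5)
Your proposal is correct and is essentially the argument the paper has in mind: the paper gives no proof of its own here, deferring to the analogous \cite[Lemma~21]{baskov23}, and that argument is exactly the one you reconstruct — both composites are dg-algebra morphisms out of $\Omega^\bullet_{B|\R}$, which is generated in degree $0$, so equality follows from the universal property once it is checked on $B$, where it reduces to $h^*\sigma=\sigma\circ\hat h$ together with the naturality of $\zeta:\Lip\to\Omega^0_\flat$ as a morphism of simplicial algebras. (Note also that the $\Delta^{n-1}$ in the statement's diagram is a typo for $\Delta^m$, which your formulation silently and correctly repairs.)
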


\begin{proof}
The proof is analogous to the proof of~\cite[Lemma~$21$]{baskov23}. 
\end{proof}

The lemma allows us to construct a dg-algebra morphism
$$\hat{\mu}:\Omega^\bullet _{B|\R}\to \Omega^\bullet _\flat(\Sing _\Lip (\spec B))$$
as 
\begin{equation}\label{form:mu}
\hat{\mu}(\omega)(\sigma)=\mu(\sigma)(\omega)
\end{equation}
for any $\omega\in\Omega^\bullet _{B|\R}$ and any $\sigma\in\Sing _\Lip (\spec B)_n$.

Any simplex $\beta\in\Delta[n]_m$ gives rise to a affine singular simplex $\Delta^m\to \Delta^n$.
This yields the restriction morphism of simplicial complexes $$\tau:\Omega^\bullet _\flat\to\CPL ^\bullet,$$
which is not multiplicative.

We have a map of complexes
$$\xi _B:\Omega^\bullet _{B|\R}\to \mathrm{S}^\bullet _\Lip(\spec B)$$
defined as
\begin{equation}\label{form:xi}
\xi_B(\omega)(\sigma) = \tau(\mu(\sigma)(\omega))(\Id_{[n]}),
\end{equation}
where $\Id_{[n]}\in\Delta[n]_n$, see \cite[Section $6.2$]{baskov23}.

The following diagram is commutative by Formula~\ref{form:mu} and Formula~\ref{form:xi}:
\begin{equation}\label{diag:taumuxiconnection}
\begin{tikzcd}[column sep = 8em, row sep = 3em]
\Omega^\bullet _\flat(\Sing_\Lip(\spec B))\ar[r,"\tau(\Sing_\Lip(\spec B))"] & \CPL ^\bullet(\Sing_\Lip(\spec B))\ar[d,Equal]\\
\Omega^\bullet _{B|\R}\ar[u,"\hat{\mu}"]\ar[r,"\xi_B",swap]&\CS^\bullet _\Lip(\spec B).
\end{tikzcd}
\end{equation}

\begin{lemma}\label{lem:multiplicativefingen}
The map
$$\HS^*(\xi _B):\HS^*(\Omega^\bullet _{B|\R})\to \HS^*(\mathrm{S}^\bullet _\Lip(\spec B))$$
is multiplicative.
\end{lemma}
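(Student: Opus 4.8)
The plan is to exploit the factorization of $\xi_B$ recorded in Diagram~\ref{diag:taumuxiconnection}, namely $\HS^*(\xi_B)=\HS^*\bigl(\tau(\Sing_\Lip(\spec B))\bigr)\circ\HS^*(\hat{\mu})$. Since $\hat{\mu}$ is a morphism of dg-algebras, $\HS^*(\hat{\mu})$ is multiplicative, so it suffices to prove that $\HS^*\bigl(\tau(\Sing_\Lip(\spec B))\bigr)$ is multiplicative. First I would record that $\tau\colon\Omega^\bullet_\flat\to\CPL^\bullet$ is a dimension-wise quasi-isomorphism: both complexes are degree-wise contractible by Lemma~\ref{lem:omegaisdegreewisecontractible} and Lemma~\ref{lem:lipschitzcochainsaredwcontractible}, the coaugmentations are dimension-wise quasi-isomorphisms by Lemma~\ref{lem:omegaflatisacyclic} and Lemma~\ref{lem:lipschitzcochainsaredwcontractible}, and $\tau$ commutes with them because it preserves the unit; so two-out-of-three applies. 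By Lemma~\ref{lem:degreewisecontrinduces} this makes $\HS^*(\tau(K))$ an isomorphism for every simplicial set $K$. The delicate point is multiplicativity, since $\tau$ itself is \emph{not} multiplicative.

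To handle this I would reduce the internal product to an external one. For a simplicial dg-algebra $A^\bullet$ and a simplicial set $K$ the internal product on $A^\bullet(K)$ equals the external product $A^\bullet(K)\otimes A^\bullet(K)\to A^\bullet(K\times K)$ followed by restriction along the diagonal $K\to K\times K$. As $\tau(-)$ is natural in the simplicial-set variable (it is post-composition with $\tau$), it commutes with diagonal restriction; hence multiplicativity of $\HS^*(\tau(K))$ reduces to commutativity, on cohomology, of the external-product square comparing the wedge product on $\Omega^\bullet_\flat$ with the Alexander--Whitney product on $\CPL^\bullet$. Unwinding the external products through the multiplications, the two composites of this square are obtained by applying $(-)(K\times L)$ to the two morphisms of simplicial complexes
$$u=\tau\circ\wedge,\qquad w=(\mathrm{AW})\circ(\tau\otimes\tau)\colon\quad \Omega^\bullet_\flat\otimes\Omega^\bullet_\flat\longrightarrow\CPL^\bullet,$$
and then precomposing with the common, natural external map $\Omega^\bullet_\flat(K)\otimes\Omega^\bullet_\flat(L)\to(\Omega^\bullet_\flat\otimes\Omega^\bullet_\flat)(K\times L)$. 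Thus everything reduces to showing that $u$ and $w$ induce the same map on $\HS^*\bigl((\Omega^\bullet_\flat\otimes\Omega^\bullet_\flat)(M)\bigr)$ for every simplicial set $M$.

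By Corollary~\ref{cor:productisdegreewisecontractible} the source $\Omega^\bullet_\flat\otimes\Omega^\bullet_\flat$ is degree-wise contractible, and, using Lemma~\ref{lem:omegaflatisacyclic} together with Künneth, it is dimension-wise acyclic with cohomology $\R$ concentrated in degree $0$. Both $u$ and $w$ are therefore dimension-wise quasi-isomorphisms into the degree-wise contractible $\CPL^\bullet$, and composing with the coaugmentation $1\mapsto 1\otimes 1$ shows that each of them carries the dimension-wise generator $[1\otimes 1]$ of $H^0$ to $[1]$. Since the source has dimension-wise cohomology only in degree $0$, the maps $u$ and $w$ agree on cohomology in each simplicial degree; and because we work over the field $\R$, where two chain maps of complexes of vector spaces are chain homotopic if and only if they agree on cohomology, the components $u_p$ and $w_p$ are chain homotopic for every simplicial degree $p$.

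The main obstacle is the final step: upgrading these dimension-wise chain homotopies to a homotopy of morphisms of simplicial complexes — one compatible with the face and degeneracy operators — which is exactly what is needed to conclude $u(M)\simeq w(M)$ and hence $\HS^*(u(M))=\HS^*(w(M))$. I would build such a natural homotopy by induction on simplicial degree, solving at each stage the extension problem posed by the homotopy already constructed on faces; the degree-wise contractibility of both source (Corollary~\ref{cor:productisdegreewisecontractible}) and target supplies precisely the extension property that makes these lifts possible, by the same mechanism underlying Lemma~\ref{lem:degreewisecontrinduces} (cf.\ \cite[Chapter~10]{felix01}). Once $u(M)\simeq w(M)$, precomposition with the external map gives commutativity of the external-product square on cohomology, and restriction along the diagonal then yields multiplicativity of $\HS^*\bigl(\tau(\Sing_\Lip(\spec B))\bigr)$, completing the argument.
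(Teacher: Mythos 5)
Your opening reduction is the same as the paper's: factor $\HS^*(\xi_B)$ through $\HS^*(\hat{\mu})$ (multiplicative, since $\hat{\mu}$ is a dg-algebra morphism) via Diagram~\ref{diag:taumuxiconnection}, and reduce to multiplicativity of $\HS^*\bigl(\tau(K)\bigr)$. Your further reformulation is also correct: internal product $=$ external product followed by diagonal restriction, so multiplicativity follows if the two composites $u=\tau\circ\wedge$ and $w=\mathrm{AW}\circ(\tau\otimes\tau)$ induce equal maps on $\HS^*\bigl((\Omega^\bullet_\flat\otimes\Omega^\bullet_\flat)(M)\bigr)$ for all $M$; and your dimension-wise analysis (K\"unneth, agreement on $H^0$ via coaugmentations, chain homotopy over a field) is fine.

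The genuine gap is the step you yourself call ``the main obstacle'': promoting the unrelated dimension-wise chain homotopies to a single homotopy commuting with all face \emph{and} degeneracy operators, which is what is needed for $(-)(M)$ to turn it into a homotopy $u(M)\simeq w(M)$. This is not a consequence of the results you invoke, and the claim that degree-wise contractibility ``supplies precisely the extension property that makes these lifts possible'' is an assertion, not an argument. Extendability gives surjectivity of restriction maps such as $\CPL^\bullet_n\to\CPL^\bullet(\partial\Delta[n])$; to run your induction you must additionally (i) extend the boundary-prescribed homotopy while preserving the identity $dh+hd=u-w$, which is an obstruction problem --- the obstruction vanishes here, but only because the source is dimension-wise acyclic in positive degrees and the relevant kernel complexes are non-negatively graded, a computation absent from your sketch --- and (ii) force compatibility with degeneracies, about which surjectivity of boundary restrictions says nothing. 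Neither point is carried out, and no cited reference supplies such a lemma; in fact Theorem~10.9 of \cite{felix01}, which the paper follows, is designed precisely to \emph{avoid} constructing homotopies of this kind. The paper instead interposes the simplicial dg-algebra $\Omega^\bullet_\flat\otimes\CPL^\bullet$ with the two dg-algebra morphisms $e_1:\omega\mapsto\omega\otimes 1$ and $e_2:f\mapsto 1\otimes f$; since all objects involved are degree-wise contractible (Lemma~\ref{lem:lipschitzcochainsaredwcontractible}, Lemma~\ref{lem:omegaisdegreewisecontractible}, Corollary~\ref{cor:productisdegreewisecontractible}) and all maps preserve coaugmentations, Lemma~\ref{lem:degreewisecontrinduces} makes every arrow a quasi-isomorphism after evaluation at $K$, whence $\HS^*(\tau(K))=(\HS^*(e_2(K)))^{-1}\circ\HS^*(e_1(K))$ is a composition of multiplicative maps. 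So your plan is plausibly repairable, but as written the crux is missing: either prove the simplicial homotopy-extension lemma in full (a substantial undertaking) or replace the final step by this zigzag.
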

The proof follows \cite[Theorem~$10.9$]{felix01}.
\begin{proof}
Consider the following commutative diagram of simplicial dg-algebras and simplicial cochains maps:
\begin{equation*}
\begin{tikzcd}[column sep=6em,row sep=4em]
\Omega^\bullet _\flat \ar[r,"e_1:\omega\mapsto\omega\otimes 1"]\ar[ddr,"\tau",swap] & \Omega^\bullet _\flat \otimes \CPL ^\bullet \ar[d,"\tau\otimes\Id"] & \CPL ^\bullet \ar[l,"1\otimes f\mapsfrom f:e_2",swap]\ar[ddl,"\Id",bend left]\\
&\CPL ^\bullet  \otimes \CPL ^\bullet \ar[d,"\mult"]&\\
&\CPL ^\bullet,&
\end{tikzcd}
\end{equation*}
where $\mult$ is the multiplication of $\CPL ^\bullet$.
Taking tensor product of coaugmentations on $\Omega^\bullet _\flat$ and $\CPL ^\bullet$ we obtain coaugmentations on $\Omega^\bullet _\flat \otimes \CPL ^\bullet$ and $\CPL ^\bullet \otimes \CPL ^\bullet$.
By Lemma~\ref{lem:lipschitzcochainsaredwcontractible}, Lemma~\ref{lem:omegaisdegreewisecontractible} and Corollary~\ref{cor:productisdegreewisecontractible} all the simplicial dg-algebras in this diagram are degree-wise contractible.
By Lemma~\ref{lem:lipschitzcochainsaredwcontractible}, Lemma~\ref{lem:omegaflatisacyclic} and the fact that all the cochain maps in the diagram preserve coaugmentation, all cochain maps are dimension-wise quasi-isomorphism.
Notice, that the maps $e_1$ and $e_2$ are in fact morphisms of simplicial dg-algebras.
Let $\eta:\Omega^\bullet _\flat \otimes \CPL ^\bullet\to\CPL ^\bullet$ be the composition of the vertical arrows.

Substituting a simplicial set $K$ gives rise to the commutative diagram of dg-algebras and cochains maps
\begin{equation*}
\begin{tikzcd}[column sep=6em,row sep=4em]
\Omega^\bullet _\flat(K) \ar[r,"e_1(K):\omega\mapsto\omega\otimes 1"]\ar[dr,"\tau(K)",swap] & (\Omega^\bullet _\flat \otimes \CPL ^\bullet)(K)\ar[d,"\eta(K)"] & \CPL ^\bullet(K)\ar[l,"1\otimes f\mapsfrom f:e_2(K)",swap]\ar[dl,"\Id",bend left]\\
&\CPL ^\bullet(K),&
\end{tikzcd}
\end{equation*}
where all of the arrows are quasi-isomorphisms by Lemma~\ref{lem:degreewisecontrinduces}.
The maps $e_1(K)$ and $e_2(K)$ are morphisms of dg-algebras.

Take the cohomology.
The map $\HS^*(e_2(K))$ is an isomorphism.
Since $\HS^*(\eta(K))\circ\HS^*(e_2(K))$ is the identity, we have $\HS^*(\eta(K))=(\HS^*(e_2(K)))^{-1}$.
We have
$$\HS^*(\tau (K))=\HS^*(\eta (K))\circ \HS^*(e_1(K))=(\HS^*(e_2(K)))^{-1}\circ \HS^*(e_1(K)),$$
hence the map $\HS^*(\tau (K))$ is multiplicative. 

Substitute $K=\Sing_\Lip(\spec B)$ and together with Diagram~\ref{diag:taumuxiconnection} one concludes that the map
$$\HS^*(\xi _B):\HS ^*(\Omega^\bullet _{B|\R})\to\HS^*(\CS ^\bullet _{\Lip}(\spec B))$$
is multiplicative.
\end{proof}
We denote by ${}^+\CS^\bullet _{\Lip,\spec B}$ the complex of sheaves of Lipschitz singular cochains on $\spec B$.
It can be equipped with the Alexander--Whitney cup product, which is non-commutative.

\subsection{The algebra of continuous functions.}
\begin{theorem}\label{thm:psiismultiplicative}
Let $X$ be a topological space, $A\subset C(X)$ be a subalgebra of the algebra of real-valued continuous functions on $X$.
Then the map
$$\Psi _A:\HS^*(\Omega^\bullet_{A|\R})\to \HH^*(X,\underline{\R}_X[0])$$
is multiplicative.
\end{theorem}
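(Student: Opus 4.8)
The strategy is to reduce to the finitely generated case settled in Lemma~\ref{lem:multiplicativefingen}, and then to transport multiplicativity through sheafification, the comparison map $\Upsilon$, the coaugmentation, and a pullback, using the formal properties \ref{prop:naturalityofproduct}--\ref{prop:compatiblewithupsilon} of the cup product on hypercohomology.

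First I would recall the construction of $\Psi_A$ from \cite{baskov23} and organize it along the filtered colimit $A=\colim_B B$ over the finitely generated subalgebras $B\subset A$. Since $\Omega^\bullet_{-|\R}$ and cohomology commute with filtered colimits, $\HS^*(\Omega^\bullet_{A|\R})=\colim_B\HS^*(\Omega^\bullet_{B|\R})$; and since the transition maps $\Omega^\bullet_{B|\R}\to\Omega^\bullet_{B'|\R}$ are morphisms of dg-algebras, the product on $\HS^*(\Omega^\bullet_{A|\R})$ is the filtered colimit of the products on the $\HS^*(\Omega^\bullet_{B|\R})$. For each $B$ the chosen generators determine a continuous map $\phi_B:X\to\spec B$, and $\Psi_A=\colim_B\Psi_B$, where $\Psi_B$ is the composite of $\HS^*(\xi_B):\HS^*(\Omega^\bullet_{B|\R})\to\HS^*(\CS^\bullet_\Lip(\spec B))$, the sheafification map $\HS^*(\sh)$ into $\HS^*\bigl({}^+\CS^\bullet_{\Lip,\spec B}(\spec B)\bigr)$, the comparison map $\Upsilon$ into $\HH^*(\spec B,{}^+\CS^\bullet_{\Lip,\spec B})$, the inverse of the coaugmentation isomorphism $\HH^*(\spec B,\RU_{\spec B}[0])\xrightarrow{\cong}\HH^*(\spec B,{}^+\CS^\bullet_{\Lip,\spec B})$, and finally the pullback $\HH^*(\phi_B,\iota_B):\HH^*(\spec B,\RU_{\spec B}[0])\to\HH^*(X,\RU_X[0])$ along the canonical morphism $\iota_B:\RU_{\spec B}[0]\to(\phi_B)_*\RU_X[0]$.

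Next I would check that each factor of $\Psi_B$ is multiplicative, using the Alexander--Whitney product on ${}^+\CS^\bullet_{\Lip,\spec B}$ throughout. The map $\HS^*(\xi_B)$ is multiplicative by Lemma~\ref{lem:multiplicativefingen}; this is the essential input, which already reconciles the commutative wedge product with the non-commutative Alexander--Whitney product. The map $\HS^*(\sh)$ is multiplicative by the analogue of Diagram~\ref{diag:shmultiplicative}, since sheafification carries the presheaf-level Alexander--Whitney product to the sheaf-level one; $\Upsilon$ is multiplicative by property~\ref{prop:compatiblewithupsilon}; the coaugmentation $\RU_{\spec B}[0]\to{}^+\CS^\bullet_{\Lip,\spec B}$ is a morphism of sheaf complexes with multiplication, so by property~\ref{prop:naturalityofproduct} the isomorphism it induces, and hence its inverse, is multiplicative; and the pullback $\HH^*(\phi_B,\iota_B)$ is multiplicative by property~\ref{prop:functorialitydifferentspaces}. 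Here the cup products on $\HH^*(\spec B,\RU_{\spec B}[0])$ and $\HH^*(X,\RU_X[0])$ are the usual sheaf cup products. Composing, each $\Psi_B$ is multiplicative.

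Finally I would pass to the colimit: the maps $\Psi_B$ are compatible with the transition maps of the filtered system, hence induce $\Psi_A=\colim_B\Psi_B$. Given $u,v\in\HS^*(\Omega^\bullet_{A|\R})$, by filteredness $u,v$ and $u\cup v$ are represented at a common stage $B$ by $u_B,v_B,u_B\cup v_B$, so $\Psi_A(u\cup v)=\Psi_B(u_B\cup v_B)=\Psi_B(u_B)\cup\Psi_B(v_B)=\Psi_A(u)\cup\Psi_A(v)$, which gives the theorem. The genuinely hard work is carried by Lemma~\ref{lem:multiplicativefingen}, whose degree-wise contractibility argument handles the passage from the commutative de Rham forms to the non-commutative Alexander--Whitney cochains; I expect the only remaining delicate point to be the compatibility of the composite $\Psi_B$ with the filtered system, i.e.\ checking that for $B\subset B'$ the squares relating $\xi_B$, $\xi_{B'}$ and the induced map $\spec B'\to\spec B$ commute and respect the products. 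Granting this naturality, the colimit argument is purely formal.
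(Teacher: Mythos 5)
Your proposal is correct and follows essentially the same route as the paper: the same factorization of the finitely-generated-stage map into $\HS^*(\xi_B)$, $\HS^*(\sh)$, $\Upsilon$, the inverse of the coaugmentation isomorphism, and the pullback along $X\to\spec B$, with each factor handled by Lemma~\ref{lem:multiplicativefingen} and properties \ref{prop:naturalityofproduct}--\ref{prop:compatiblewithupsilon}, followed by passage to the filtered colimit over finitely generated subalgebras. Your explicit spelling-out of the colimit argument (representing classes at a common stage $B$) and of the naturality of $\xi_B$ in $B$ only makes precise what the paper leaves implicit in its final sentence.
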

\begin{proof}

Take a finitely generated subalgebra $B\subset A$.
One has a continuous function $\Gamma _B:X\to\spec B$.
There is a morphism of dg-algebras
$$\sh:\CS^\bullet _{\Lip}(\spec B) \to {}^+\CS^\bullet _{\Lip,\spec B}(\spec B)$$
natural in subalgebra $B$.

There is also a morphism of sheaf complexes with multiplication
$$\epsilon:\RU _{\spec B}[0]\to {}^+\CS^\bullet_{\Lip,\spec B}$$
which is a quasi-isomorphism, see \cite[Lemma $3$]{baskov23}.

Consider the following maps.
\begin{enumerate}
\item
The map
$$\HS^*(\Omega^\bullet _{B|\R})\xrightarrow{\HS^*(\xi _B)}\HS^*(\mathrm{S}^\bullet _\Lip(\spec B))$$
is multiplicative by Lemma~\ref{lem:multiplicativefingen}.
\item
The map
$$\HS^*(\mathrm{S}^\bullet _\Lip(\spec B))\xrightarrow{\HS^*(\sh)}\HS^*({}^+\CS^\bullet _\Lip(\spec B))$$
is a clearly multiplicative.
\item
The map
$$\HS^*({}^+\CS^\bullet _\Lip(\spec B))\xrightarrow{\Upsilon}\HH^*(\spec B,{}^+\CS^\bullet_{\Lip,\spec B})$$
is multiplicative by the property~\ref{prop:compatiblewithupsilon} of the internal cup product.
\item
The map
$$\HH^*(\spec B,{}^+\CS^\bullet_{\Lip,\spec B})\xrightarrow{(\HH^*(\epsilon))^{-1}}\HH^*(\spec B,\RU _{\spec B}[0])$$
is multiplicative by the property~\ref{prop:naturalityofproduct} of the internal cup product.
\item
The map
$$\HH^*(\spec B,\RU _{\spec B}[0])\xrightarrow{\Gamma^*}\HH ^* (X,\RU _{X} [0])$$
is multiplicative by the property~\ref{prop:functorialitydifferentspaces} of the internal cup product.
\end{enumerate}
Hence the composition of the maps $(1)$--$(5)$
$$\HS^*(\Omega^\bullet _{B|\R})\to \HH ^* (X,\RU _{X} [0])$$
is multiplicative.

The map $\Psi _A$ is constructed by passing to the colimit of the above map over all finitely generated subalgebras $B\subset A$, hence 
is multiplicative.
\end{proof}

\Addresses

\end{document}